\newcommand{\bbC}{{\mathbb{C}}}
\newcommand{\bbR}{{\mathbb{R}}}
\newcommand{\bbZ}{{\mathbb{Z}}}
\newcommand{\calF}{{\mathcal F}}
\newcommand{\calG}{{\mathcal G}}
\newcommand{\calS}{{\mathcal S}}
\newcommand{\calT}{{\mathcal T}}
\newcommand{\lb}{\label}
\newcommand{\spec}{\text{\rm{spec}}}
\newcommand{\bi}{\bibitem}
\newcommand{\beq}{\begin{equation}}
\newcommand{\eeq}{\end{equation}}
\newcommand{\ba}{\begin{align}}
\newcommand{\ea}{\end{align}}
\newcounter{smalllist}
\newcommand{\comm}[1]{}
\numberwithin{equation}{section}
\newtheorem{theorem}{Theorem}[section]
\newtheorem{lemma}[theorem]{Lemma}
\newtheorem{corollary}[theorem]{Corollary}
\theoremstyle{definition}
\newtheorem{example}[theorem]{Example}
\newtheorem{conjecture}[theorem]{Conjecture}
\newtheorem*{remark}{Remark}
\newtheorem*{remarks}{Remarks}
\newcommand{\jap}[1]{\langle #1 \rangle}
\newcommand{\norm}[1]{\lVert#1\rVert}
\begin{document}

\title[Remarks on Trees]{Remarks on Periodic Jacobi Matrices on Trees}
\author[J.~S.~Christiansen, B.~Simon and M.~Zinchenko]{Jacob S.~Christiansen$^{1,4}$, Barry Simon$^{2,5}$ \\and
Maxim~Zinchenko$^{3,6}$}

\thanks{$^1$ Centre for Mathematical Sciences, Lund University, Box 118, 22100 Lund, Sweden.
 E-mail: stordal@maths.lth.se}

\thanks{$^2$ Departments of Mathematics and Physics, Mathematics 253-37, California Institute of Technology, Pasadena, CA 91125.
E-mail: bsimon@caltech.edu}

\thanks{$^3$ Department of Mathematics and Statistics, University of New Mexico,
Albuquerque, NM 87131, USA; E-mail: maxim@math.unm.edu}

\thanks{$^4$ Research supported by the Swedish Research Council (VR) Grant No. 2018-03500.}

\thanks{$^5$ Research supported in part by NSF grant DMS-1665526.}

\thanks{$^6$ Research supported in part by Simons Foundation grant CGM-581256.}

\

\date{\today}
\keywords{Jacobi Matrices, Trees, Spectral Theory}
\subjclass[2010]{47B36, 47B15 20E08}

\begin{abstract}  We look at periodic Jacobi matrices on trees.  We provide upper and lower bounds on the gap of such operators analogous to the well known gap in the spectrum of the Laplacian on the upper half-plane with hyperbolic metric.  We make some conjectures about antibound states and make an interesting observation for what \cite{ABS} calls the $rg$-model.
\end{abstract}

\maketitle

\section{Introduction} \lb{s1}

This paper is a contribution to a growing recent literature on periodic Jacobi matrices on trees.  Specifically, we were motivated by the work of Avni, Breuer and Simon \cite{ABS} (see also \cite{ABKS}) whose notation and ideas we will follow; in particular, we refer the reader to that paper for the definitions from graph theory that we will use.  We note also the relevance of some recent preprints by a group at Berkeley \cite{GVK, BGVM}.

One starts out with a finite leafless graph, $\calG$, and a Jacobi matrix on that graph.  By this we mean a matrix with indices labelled by the vertices in the graph, whose diagonal elements are a function, $b$, on the vertices and off diagonal elements, which are only non-zero for pairs of vertices which are the two ends of some edge, with matrix elements determined by a function, $a$, on the edges.  The universal cover, $\calT$, of $\calG$ is always a leafless tree.  There is a unique and natural lift of any Jacobi matrix, $J$, on $\calG$ to an operator, $H$, (still called a Jacobi matrix) on $\ell^2(\calG)$.  Because $H$ is invariant under a group of deck transformations on $\calT$ (see \cite[Section 1.7]{CAA} for the covering space language we exploit), we call $H$ a \emph{periodic Jacobi matrix}.

There are three big general theorems in the subject: (1) a result of Sunada \cite{Sun} on labelling of gaps in the spectrum that implies the spectrum of $H$ is at most $p$ bands where $p$ is the number of vertices in the underlying finite graph $\calG$, (2) a result of Aomoto \cite{AomotoPoint} stating that if $G$ has a fixed degree then $H$ has no point spectrum, and (3) a result of Avni--Breuer--Simon \cite{ABS} that there is no singular spectrum because matrix elements of the resolvent are algebraic functions.  Besides a very few additional theorems, the subject at this point is mainly some interesting examples and lots of conjectures and questions.

This paper has some additional results, observations and an interesting fact about one fascinating example.

Typically, $\sigma$, the largest eigenvalue of $J$, does not lie in the spectrum of $H$.  Two of the subjects we treat involve $\sigma$.  Section \ref{s2} discusses quantitative estimates on the distance from $\sigma$ to the top of the spectrum of $H$.  Section \ref{s3} will show that in a number of examples, $\sigma$ can be viewed as an antibound state and we make a general conjecture that it is a pole of the analytically continued Green's function.  Section \ref{s4} studies an example of a finite graph with non-fixed degree for which Aomoto \cite{AomotoPoint} showed that $H$ has a point eigenvalue and ABS \cite{ABS} found an explicit eigenfunction.  We prove that this eigenfunction and its translates span the eigenspace.

\section*{Acknowledgment}

Jean Bourgain made many deep contributions to the spectral theory of Jacobi matrices and we dedicate this paper to his memory. JSC and MZ would like to thank F. Harrison and E. Mantovan for the hospitality of Caltech where some of this work was done.

\section{Gap Comparison Theorem} \lb{s2}

In this section we want to fix a connected graph, $\calG$, with $p$ vertices, $V(\calG)$, and $q$ edges, $E(\calG)$.  We are going to want to fix two sets of Jacobi parameters on $\calG$, namely $\{a_\alpha,b_j\}_{\alpha\in E(\calG), j\in V(\calG)}$ and $\{\tilde{a}_\alpha,\tilde{b}_j\}_{\alpha\in E(\calG), j\in V(\calG)}$, with corresponding Jacobi matrices $J$ and $\tilde{J}$ on $\ell^2(\calG)$ and to look at the induced periodic Jacobi matrices $H$ and $\tilde{H}$ on $\ell^2(\calT)$ with $\calT$ the universal cover of $\calG$.

By the Perron--Frobenius theorem \cite[Theorem 1.4.4]{BaRa97}, $J$ has a simple largest eigenvalue, $\sigma(a,b)$, with unique strictly positive normalized eigenvector, $\psi(a,b,j)_{j\in V}$. (Below in the bipartite case, where we use $\sigma_-$, we will sometimes write $\sigma_+$ for $\sigma$.) We will use $\Sigma(a,b)$ for the $\sup$ of the spectrum of $H$.
It is a fundamental result of Sy--Sunada \cite{SS} that if the graph $\calG$ has more than one independent closed loop (so the fundamental group is non-amenable), then the \emph{gap},
\begin{equation}\label{2.1}
  G(a,b)\equiv \sigma(a,b) - \Sigma(a,b)
\end{equation}
is strictly positive.  In this section, our goal is to prove comparison inequalities for the gaps $G(a,b)$ and $G(\tilde{a},\tilde{b})$ based on comparison results in distinct but related contexts by Kirsch--Simon \cite{KiSi} and Frank--Simon--Weidl \cite{FSW}.

These results rely on a ground state representation for the quadratic form of $J$, a formulae that for continuum Sturm--Liouville operators goes back to Jacobi \cite{Jac}.  For operators, $H\equiv -\Delta+V$, on $L^2(\bbR^\nu)$ for which there is a strictly positive solution of the PDE, $(-\Delta+V)\psi=E_0\psi$, it takes the form
\begin{equation}\label{2.2}
  \jap{f\psi,(H-E_0)f\psi} = \int |\nabla f(x)|^2 \psi(x)^2 \,d^\nu x
\end{equation}
This was first used in spectral theory by Birman \cite{Birman} and made popular in constructive Hamiltonian quantum field theory after its use by Nelson \cite{Nelson} and then Glimm, Jaffe, Sigal and others \cite{GJQFT, SiPphi2}.  Remarkably, the version for Jacobi-type matrices seems to have only been written down in 2008 by Frank--Simon--Weidl \cite{FSW} (see Keller et al. \cite{KPP}).  For our situation, it takes the following form:

\begin{theorem} \lb{T2.1} Let $J$ be a Jacobi matrix on a finite graph, $\calG$, or a periodic Jacobi matrix on an infinite tree, $\calT$.
Suppose $\psi$ is a nonnegative sequence $\{\psi_i\}_{i\in V}$ that obeys $J\psi=\sigma\psi$.
Then for any real sequence $\{f_i\}_{i\in V}$ with $f\psi\in\ell^2(V)$, we have that

\begin{equation}\label{2.3}
  \jap{f\psi,(\sigma-J)f\psi} = \sum_{\alpha=(j,k)} a_\alpha \psi_j\psi_k(f_j-f_k)^2
\end{equation}
\end{theorem}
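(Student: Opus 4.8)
The plan is to prove the identity \eqref{2.3} by a direct algebraic manipulation of the quadratic form, using the eigenvalue equation $J\psi = \sigma\psi$ to rewrite the diagonal contribution. First I would expand the left-hand side. Writing $g_i = f_i\psi_i$, we have
\begin{equation}\label{plan:expand}
  \jap{g,(\sigma - J)g} = \sigma\sum_i g_i^2 - \sum_i b_i g_i^2 - \sum_{\alpha = (j,k)} a_\alpha\, g_j g_k\, (\text{counted once per edge, doubled}),
\end{equation}
where the off-diagonal sum runs over oriented edges or, equivalently, twice over unoriented edges. The point is that $(\sigma - J)$ acting on $g$ has diagonal part $(\sigma - b_i)g_i$ and off-diagonal part $-\sum_{k\sim i} a_{(i,k)} g_k$. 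The eigenvalue equation gives, for each vertex $i$, the scalar relation $\sigma\psi_i = b_i\psi_i + \sum_{k\sim i} a_{(i,k)}\psi_k$, hence $\sigma - b_i = \frac{1}{\psi_i}\sum_{k\sim i} a_{(i,k)}\psi_k$ at every vertex where $\psi_i \neq 0$.

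Next I would substitute this expression for $\sigma - b_i$ into the diagonal term $\sum_i(\sigma - b_i)g_i^2 = \sum_i(\sigma - b_i)f_i^2\psi_i^2$, which becomes $\sum_i f_i^2\psi_i\sum_{k\sim i} a_{(i,k)}\psi_k = \sum_{(i,k)} a_{(i,k)}\psi_i\psi_k f_i^2$, the sum now over oriented edges. The cross term is $-\sum_{(j,k)} a_{(j,k)} f_j f_k\psi_j\psi_k$ over oriented edges. Combining, the left-hand side equals $\sum_{(j,k)} a_{(j,k)}\psi_j\psi_k(f_j^2 - f_j f_k)$ over oriented edges; symmetrizing in $j \leftrightarrow k$ and collapsing the oriented sum to an unoriented one produces exactly $\sum_{\alpha = (j,k)} a_\alpha\psi_j\psi_k(f_j - f_k)^2$, which is \eqref{2.3}. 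In the periodic (infinite tree) case, the hypothesis $f\psi \in \ell^2(V)$ together with boundedness of $H$ ensures all the rearrangements are of absolutely convergent sums, so no boundary terms appear and the manipulation is legitimate; the finite-graph case is of course purely formal.

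The main subtlety I expect is bookkeeping around the orientation of edges and the factor-of-two conventions: one must be consistent about whether $\sum_{\alpha=(j,k)}$ means a sum over unoriented edges (each edge once) or over ordered pairs, and the identity as written clearly intends the former, so the symmetrization step must be done carefully. A secondary, genuine point is the justification of absolute convergence and the vanishing of any ``boundary at infinity'' contribution in the tree case — here one uses that $\psi$ is a pointwise eigenfunction (not necessarily in $\ell^2$), that $\sup|a_\alpha| < \infty$ and the tree has bounded degree so $H$ is a bounded operator, and that $f\psi \in \ell^2$; approximating $f$ by finitely supported sequences and passing to the limit, or directly invoking Fubini–Tonelli on the nonnegative rearranged sums, closes the gap. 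Apart from these two points the proof is a one-line substitution.
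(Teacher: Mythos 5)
Your proof is correct, but it reaches \eqref{2.3} by a different mechanism than the paper. You expand the quadratic form vertex by vertex and use the eigenvalue equation pointwise, in the form $(\sigma-b_i)\psi_i=\sum_{k\sim i}a_{(i,k)}\psi_k$, to convert the diagonal contribution into an edge sum before symmetrizing; the paper instead works at the operator level, computing the double commutator $[M_f,[M_f,J(a,b)-\sigma]]=J(a_\alpha(f_j-f_k)^2,\,b\equiv 0)$ and pairing it with $\psi$, so that the identity $(J-\sigma)\psi=0$ kills the uncommutated terms and produces $-2\jap{f\psi,(J-\sigma)f\psi}$ on one side and twice the edge sum on the other. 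Your direct substitution is more elementary and makes the role of the eigenvalue equation transparent at each vertex (and your remark that the substitution is harmless even where $\psi_i=0$ is right, since the corresponding terms vanish on both sides; in fact connectivity and $a_\alpha>0$ force $\psi>0$ or $\psi\equiv 0$); the commutator formulation buys a cleaner bookkeeping of the orientation/factor-of-two issue you correctly flag as the main hazard, since the square $(f_j-f_k)^2$ appears automatically as the off-diagonal entry of an operator rather than from a hand symmetrization. For the extension from finitely supported $f$ to general $f$ with $f\psi\in\ell^2(V)$, your argument coincides with the paper's: approximate by finitely supported $f_n$, use boundedness of $J$ for the left side, and split the right side into a nonnegative ``diagonal'' piece handled by monotone convergence and a cross term controlled by the boundedness of $J(a,0)$.
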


\begin{remarks} 1. The sum in \eqref{2.3} is over all edges in the graph or tree and the notation indicates that $j$ and $k$ are the two ends of the edge.

2. For the finite graph, $\psi$ is just the eigenfunction for the largest eigenvalue of the graph.  For the tree, $\psi$ is the (periodic) lift of the underlying graph $\psi$ to its universal cover.

3. We use $V$ as shorthand notation for either $V(\calG)$ or $V(\calT)$.

4.  The proof is as in \cite[Theorem 3.2]{FSW}.  If $M_f$ is multiplication by a function $f$ of finite support, one verifies that
\begin{equation} \label{2.3a}
  [M_f,(J(a,b)-\sigma)] = J(a_\alpha(f_j-f_k),b\equiv 0)
\end{equation}
which then implies
\begin{equation} \label{2.3b}
  [M_f,[M_f,(J(a,b)-\sigma)]] = J(a_\alpha(f_j-f_k)^2,b\equiv 0)
\end{equation}
Letting $\psi^\sharp$ be $\psi$ restricted to the support of $f$ and all vertices connected to this finite support then yields $M_f(J(a,b)-\sigma)\psi^\sharp=0$ and hence
\begin{equation} \label{2.3c}
  \jap{\psi^\sharp,[M_f,[M_f,(J(a,b)-\sigma)]]\psi^\sharp} = -2\jap{f\psi,(J(a,b)-\sigma)f\psi}
\end{equation}
so \eqref{2.3} for finitely supported $f$ follows from \eqref{2.3b} and \eqref{2.3c}.
To get \eqref{2.3} for general $f$ with $f\psi\in\ell^2(V)$, one approximates $f$ by finitely supported $f_n$ of increasing support such that 
$\|(f-f_n)\psi\|_{\ell^2(V)}\to 0$ and takes limits of both sides. 
The LHS converges since $J(a,b)$ is a bounded operator and the RHS converges since we can view it as the difference of
\begin{equation}
S_1(f_n)=\sum_{\alpha=(j,k)} a_\alpha \psi_j\psi_k \bigl[(f_n)_j^2+(f_n)_k^2\bigr]
\end{equation}
and 
\begin{equation}
S_2(f_n)=2\sum_{\alpha=(j,k)} a_\alpha (f_n\psi)_j(f_n\psi)_k = 2\jap{f_n\psi,J(a,0)f_n\psi}
\end{equation}
where $S_1(f_n)\to S_1(f)$ by monotone convergence and $S_2(f_n)\to S_2(f)$ since $J(a,0)$ is a bounded operator.

\end{remarks}

We care about \eqref{2.3} because with $\norm{f}_\psi=\norm{f\psi}_{\ell^2(V(\calT))}$ and $\calF$ the sequences of finite support, we have that
\begin{equation}\label{2.4}
  G(a,b) = \inf_{f\in\calF} \left[ \left(\norm{f}_\psi\right)^{-2} \sum_{\alpha=(j,k)} a_\alpha \psi_j\psi_k(f_j-f_k)^2 \right]
\end{equation}
This variational principle immediately implies our basic comparison theorem:

\begin{theorem} \lb{T2.2} Fix a finite graph, $\calG$, and two sets of Jacobi parameters, $\{a_\alpha,b_j\}_{\alpha\in E(\calG), j\in V(\calG)}$ and $\{\tilde{a}_\alpha,\tilde{b}_j\}_{\alpha\in E(\calG), j\in V(\calG)}$.  Let $\psi$ and $\tilde\psi$ be the positive solutions of $J\psi=\sigma\psi$ and $\tilde J\tilde\psi=\tilde\sigma\tilde\psi$, respectively, and set
\begin{equation}\label{2.5}
  S = \sup_{\alpha=(i, j)\in E(\calG)} \left[\frac{a_\alpha}{\tilde{a}_\alpha}\frac{\psi_i\psi_j}{\tilde{\psi}_i\tilde{\psi}_j}\right], \qquad \tilde{S}= \sup_{\alpha=(i, j)\in E(\calG)} \left[\frac{\tilde{a}_\alpha}{a_\alpha}\frac{\tilde{\psi}_i\tilde{\psi}_j}{\psi_i\psi_j}\right]
\end{equation}
\begin{equation}\label{2.6}
  I = \inf_{i\in V(\calG)} \frac{\psi_i^2}{\tilde{\psi}_j^2}, \qquad \tilde{I}= \inf_{i\in V(\calG)} \frac{\tilde{\psi}_j^2}{\psi_i^2}
\end{equation}
Then
\begin{equation}\label{2.7}
 \frac{\tilde{I}}{\tilde{S}} G(\tilde{a},\tilde{b}) \le  G(a,b) \le \frac{S}{I} G(\tilde{a},\tilde{b})
\end{equation}
\end{theorem}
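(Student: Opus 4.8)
The plan is to derive \eqref{2.7} directly from the variational formula \eqref{2.4}, applied simultaneously to the two sets of Jacobi parameters. Since $\calG$ and hence $\calT$ is fixed, the infimum in \eqref{2.4} runs over the same class $\calF$ of finitely supported sequences $f$ on $V(\calT)$ in both cases, and both the numerator $\sum_{\alpha=(j,k)} a_\alpha \psi_j\psi_k(f_j-f_k)^2$ and the denominator $\norm{f}_\psi^2 = \sum_{i} f_i^2 \psi_i^2$ are compared edge-by-edge (resp. vertex-by-vertex) against their tilded counterparts. By symmetry between the two parameter sets, it suffices to prove the upper bound $G(a,b) \le (S/I)\, G(\tilde a,\tilde b)$; the lower bound follows by interchanging the roles of $(a,b)$ and $(\tilde a,\tilde b)$, which turns $S$ into $\tilde S$ and $I$ into $\tilde I$ and inverts the inequality.

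For the upper bound, first I would establish the two pointwise estimates that feed into the Rayleigh quotient. From the definition of $S$ in \eqref{2.5}, for every edge $\alpha=(j,k)$ we have $a_\alpha \psi_j\psi_k \le S\, \tilde a_\alpha \tilde\psi_j\tilde\psi_k$, since all the quantities involved are strictly positive (Perron--Frobenius guarantees $\psi, \tilde\psi > 0$, and $a_\alpha, \tilde a_\alpha$ are the nonzero off-diagonal parameters). Hence for any $f \in \calF$,
\begin{equation}\label{plan1}
\sum_{\alpha=(j,k)} a_\alpha \psi_j\psi_k(f_j-f_k)^2 \le S \sum_{\alpha=(j,k)} \tilde a_\alpha \tilde\psi_j\tilde\psi_k(f_j-f_k)^2 .
\end{equation}
From the definition of $I$ in \eqref{2.6} (read with matching indices, $\psi_i^2 \ge I\, \tilde\psi_i^2$ for all $i\in V$, using periodicity to pass from the graph to the tree), we get for the same $f$,
\begin{equation}\label{plan2}
\norm{f}_\psi^2 = \sum_{i\in V(\calT)} f_i^2 \psi_i^2 \ge I \sum_{i\in V(\calT)} f_i^2 \tilde\psi_i^2 = I\,\norm{f}_{\tilde\psi}^2 .
\end{equation}

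Combining \eqref{plan1} and \eqref{plan2}, for every $f\in\calF$ with $f\tilde\psi \ne 0$ (equivalently $f\psi\ne 0$, since both $\psi,\tilde\psi$ are everywhere positive) the Rayleigh quotient for $(a,b)$ is bounded by $S/I$ times that for $(\tilde a,\tilde b)$ evaluated at the \emph{same} $f$:
\begin{equation}\label{plan3}
\frac{\sum_{\alpha=(j,k)} a_\alpha \psi_j\psi_k(f_j-f_k)^2}{\norm{f}_\psi^2} \le \frac{S}{I}\cdot\frac{\sum_{\alpha=(j,k)} \tilde a_\alpha \tilde\psi_j\tilde\psi_k(f_j-f_k)^2}{\norm{f}_{\tilde\psi}^2} .
\end{equation}
Taking the infimum over $f\in\calF$ on both sides and invoking \eqref{2.4} for each of the two operators then gives $G(a,b) \le (S/I)\,G(\tilde a,\tilde b)$. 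The only point requiring a word of care — and the one I would flag as the main (minor) obstacle — is the reduction from the graph-level suprema/infima in \eqref{2.5}--\eqref{2.6} to the tree-level inequalities used in \eqref{plan1}--\eqref{plan2}: one must note that under the covering map every edge of $\calT$ lies over an edge of $\calG$ and every vertex of $\calT$ over a vertex of $\calG$, and that the lifted $\psi,\tilde\psi$ and Jacobi parameters are constant on fibers, so the ratios appearing in $S,\tilde S,I,\tilde I$ take only finitely many values and the graph-level extrema control the tree-level ones uniformly. Everything else is the elementary manipulation of Rayleigh quotients above.
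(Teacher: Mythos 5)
Your proposal is correct and follows exactly the route the paper intends: the paper simply states that the variational principle \eqref{2.4} ``immediately implies'' Theorem \ref{T2.2}, and your argument is precisely the edge-by-edge and vertex-by-vertex comparison of the Rayleigh quotient that makes this one-liner rigorous, including the correct handling of the lift from $\calG$ to $\calT$ and the symmetry argument for the lower bound.
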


\begin{remark} It is easy to see that one need not take a normalized $\psi$ or $\tilde{\psi}$ in these definitions since normalization constants drop out of the ratios of $S/I$.
\end{remark}

We can also define a lower gap by letting $\sigma_-(a,b)$ to be the lowest eigenvalue of $J$, $\Sigma_-(a,b)$ the $\inf$ of the spectrum of $H$ and
\begin{equation}\label{2.8}
  G_-(a,b)\equiv \Sigma_-(a,b) - \sigma_-(a,b)
\end{equation}

A graph $\calG$ is called \emph{bipartite} if $V(G)$ can be written as a disjoint union of $V_1$ and $V_2$ so that every edge $\alpha\in E(\calG)$ has one end in $V_1$ and one end in $V_2$.  In that case, if $U$ is the unitary operator that multiplies components of vector, $u_i$, with $i\in V_1$ (resp. $i\in V_2$) by $1$ (resp. $-1$), then
\begin{equation}\label{2.9}
  U(-J(a,b))U^{-1}=J(a,-b)
\end{equation}
which implies that

\begin{equation}\label{2.10}
  \sigma_-(a,b) = - \sigma(a,-b), \quad \Sigma_-(a,b) = - \Sigma(a,-b), \quad G_-(a,b) = G(a,-b)
\end{equation}

Moreover, for bipartite graphs, \eqref{2.9} implies there is a vector $\psi^{(-)}\in\ell^2(V(\calG))$ with $U\psi^{(-)}$ positive and so that $J(a,b)\psi^{(-)}=\sigma_-(a,b)\psi^{(-)}$.  Thus if we define $S^{(-)}$ and $I^{(-)}$ with $\psi$ replaced by $\psi^{(-)}$, we find that

\begin{theorem} \lb{T2.3} Fix a finite bipartite graph, $\calG$, and two sets of Jacobi parameters, $\{a_\alpha,b_j\}_{\alpha\in E(\calG), j\in V(\calG)}$ and $\{\tilde{a}_\alpha,\tilde{b}_j\}_{\alpha\in E(\calG), j\in V(\calG)}$.  Define $S^{(-)}$, $I^{(-)}$, $\tilde{S}^{(-)}$, and $\tilde{I}^{(-)}$ as above.  Then

\begin{equation}\label{2.11}
 \frac{\tilde{I}^{(-)}}{\tilde{S}^{(-)}} G_-(\tilde{a},\tilde{b}) \le  G_-(a,b) \le \frac{S^{(-)}}{I^{(-)}} G_-(\tilde{a},\tilde{b})
\end{equation}
\end{theorem}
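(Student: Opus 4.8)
The plan is to deduce Theorem \ref{T2.3} directly from Theorem \ref{T2.2} by applying the latter to the Jacobi parameters $(a,-b)$ and $(\tilde a,-\tilde b)$ and then translating the conclusion back through the bipartite symmetry \eqref{2.9}--\eqref{2.10}. There is essentially no new analytic content; the work is in checking that the starred comparison constants of the statement are exactly the unstarred constants of Theorem \ref{T2.2} for the reflected problem.

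First I would record the facts about the reflected problem. Since $\calG$ is bipartite, so is its universal cover $\calT$ (a tree has no cycles at all), and the sign-flip operator $U$ lifts to a unitary $\widetilde U$ on $\ell^2(V(\calT))$ with $\widetilde U(-H(a,b))\widetilde U^{-1}=H(a,-b)$; hence $\Sigma(a,-b)=-\Sigma_-(a,b)$ and $G(a,-b)=G_-(a,b)$, as already noted in \eqref{2.10}. On the finite graph, $J(a,b)\psi^{(-)}=\sigma_-(a,b)\psi^{(-)}$ together with \eqref{2.9} gives $J(a,-b)\bigl(U\psi^{(-)}\bigr)=-\sigma_-(a,b)\bigl(U\psi^{(-)}\bigr)=\sigma(a,-b)\bigl(U\psi^{(-)}\bigr)$, and by hypothesis $U\psi^{(-)}$ is positive. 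So $U\psi^{(-)}$ is precisely the Perron--Frobenius eigenvector that Theorem \ref{T2.2} attaches to the parameters $(a,-b)$, and likewise $U\tilde\psi^{(-)}$ for $(\tilde a,-\tilde b)$.

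Next I would check that the constants match. For an edge $\alpha=(i,j)$ of a bipartite graph one endpoint lies in $V_1$ and the other in $V_2$, so $U$ contributes a factor $(+1)(-1)=-1$ to the product over that edge: $(U\psi^{(-)})_i(U\psi^{(-)})_j=-\psi^{(-)}_i\psi^{(-)}_j$. This sign is the same for $\psi^{(-)}$ and for $\tilde\psi^{(-)}$, so it cancels in every ratio occurring in \eqref{2.5}; and it is invisible in the vertex ratios of \eqref{2.6} and in $\norm{f}_\psi$, which involve only squares. Therefore the quantities $S,I,\tilde S,\tilde I$ produced by Theorem \ref{T2.2} for the pair $(a,-b),(\tilde a,-\tilde b)$ with eigenvectors $U\psi^{(-)},U\tilde\psi^{(-)}$ coincide, respectively, with $S^{(-)},I^{(-)},\tilde S^{(-)},\tilde I^{(-)}$ as defined before the statement. (It is also worth noting that the ground-state representation \eqref{2.3}, and hence the variational formula \eqref{2.4}, is legitimate for the reflected problem precisely because $a_\alpha (U\psi^{(-)})_i(U\psi^{(-)})_j=a_\alpha\lvert\psi^{(-)}_i\psi^{(-)}_j\rvert>0$ on each edge.)

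Finally, applying Theorem \ref{T2.2} to $(a,-b)$ and $(\tilde a,-\tilde b)$ yields
\begin{equation*}
 \frac{\tilde I^{(-)}}{\tilde S^{(-)}}\, G(\tilde a,-\tilde b) \le G(a,-b) \le \frac{S^{(-)}}{I^{(-)}}\, G(\tilde a,-\tilde b),
\end{equation*}
and substituting $G(a,-b)=G_-(a,b)$ and $G(\tilde a,-\tilde b)=G_-(\tilde a,\tilde b)$ from \eqref{2.10} gives \eqref{2.11}. The only point that requires care is the bookkeeping in the previous paragraph: verifying that the $U$-induced signs cancel so that the reflected comparison constants are literally the starred constants of the statement, and that $U\psi^{(-)}$ (resp.\ $U\tilde\psi^{(-)}$) genuinely satisfies the positivity hypothesis of Theorem \ref{T2.2}.
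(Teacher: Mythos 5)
Your proof is correct and follows essentially the same route as the paper: the paper likewise deduces Theorem \ref{T2.3} by applying Theorem \ref{T2.2} to the reflected parameters $(a,-b)$ and $(\tilde a,-\tilde b)$ via the bipartite symmetry \eqref{2.9}--\eqref{2.10}, using that $U\psi^{(-)}$ is the positive Perron--Frobenius vector for $J(a,-b)$. Your explicit bookkeeping on the cancellation of the $U$-induced signs in the comparison constants simply makes precise what the paper leaves implicit.
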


These are especially interesting if we have one comparison operator where we can compute everything.  If $\calG$ is a finite degree $d$ graph so that $\calT$ is a degree $d$ homogeneous tree, we can take $\tilde{a}_\alpha=1$ for all $\alpha\in E(\calG)$ and $\tilde{b}_j=0$ for all $j\in V(\calG)$.  Then $\tilde{\psi}_j=1$ for all $j$ is an unnormalized positive eigenvector with $\tilde{\sigma}=d$.  By \cite[Example 7.1]{ABS}, we have that $\tilde{\Sigma}=\sqrt{4(d-1)}$ so $\tilde{G}=d-\sqrt{4(d-1)}$.
Thus we have what we regard as the main result of this section:

\begin{corollary} \lb{C2.4} Fix a finite graph, $\calG$, of constant degree $d$ and a set, $\{a_\alpha,b_j\}_{\alpha\in E(\calG), j\in V(\calG)}$, of Jacobi parameters. Let
\begin{equation}\label{2.12}
  S = \sup_{\alpha=(ij)\in E(\calG)} \left[a_\alpha\psi_i\psi_j\right], \qquad \tilde{S}= \sup_{\alpha=(ij)\in E(\calG)} \left[\frac{1}{a_\alpha\psi_i\psi_j}\right]
\end{equation}
\begin{equation}\label{2.13}
  I = \inf_{i\in V(\calG)}\psi_i^2, \qquad \tilde{I}= \inf_{i\in V(\calG)} \frac{1}{\psi_i^2}
\end{equation}
Then
\begin{equation}\label{2.14}
 \frac{\tilde{I}}{\tilde{S}}\left(d-\sqrt{4(d-1)}\right) \le  G(a,b) \le \frac{S}{I}\left(d-\sqrt{4(d-1)}\right)
\end{equation}
\end{corollary}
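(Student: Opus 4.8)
The plan is to read off Corollary \ref{C2.4} as the special case of Theorem \ref{T2.2} in which $(\tilde a,\tilde b)$ is the free (adjacency) operator on the degree-$d$ homogeneous tree. Concretely, I would take $\tilde a_\alpha\equiv 1$ on $E(\calG)$ and $\tilde b_j\equiv 0$ on $V(\calG)$, so that $\tilde J$ is the adjacency matrix of $\calG$ and $\tilde H$ its periodic lift to $\calT$, which is the $d$-regular tree because $\calG$ has constant degree $d$.

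The steps are then routine. First, identify the Perron--Frobenius data of $\tilde J$: since every vertex has degree $d$, the constant sequence $\tilde\psi_j\equiv 1$ satisfies $(\tilde J\tilde\psi)_i=\sum_{j\sim i}1=d$, so $\tilde J\tilde\psi=d\,\tilde\psi$; as $\tilde\psi$ is strictly positive it is (up to normalization) the Perron--Frobenius eigenvector and $\tilde\sigma=d$. Second, record $\tilde\Sigma=\sup\spec(\tilde H)$: here $\tilde H$ is the adjacency operator on the $d$-regular tree, whose spectrum is the Kesten interval $[-2\sqrt{d-1},2\sqrt{d-1}]$, which is exactly \cite[Example 7.1]{ABS}; hence $\tilde\Sigma=\sqrt{4(d-1)}$ and $G(\tilde a,\tilde b)=d-\sqrt{4(d-1)}$. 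Third, substitute $\tilde a_\alpha=1$ and $\tilde\psi\equiv 1$ into the definitions \eqref{2.5}--\eqref{2.6}: the ratios collapse so that $S,\tilde S,I,\tilde I$ become precisely the quantities \eqref{2.12}--\eqref{2.13}, and the chain \eqref{2.7} becomes \eqref{2.14}. Since $\calG$ is finite and $\psi$ is strictly positive with $a_\alpha>0$, all four extrema are attained and finite, so the bounds are meaningful (and for $d=2$, where $\calT$ is amenable, both sides are simply $0$).

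There is essentially no hard step: beyond Theorem \ref{T2.2}, the only input is the spectral radius of the adjacency operator on the regular tree, which is classical and already invoked in the paragraph preceding the corollary. The one point that deserves a line of care is the use of the unnormalized comparison eigenvector $\tilde\psi\equiv 1$; this is legitimate by the Remark following Theorem \ref{T2.2}, since $S,\tilde S,I,\tilde I$ depend only on ratios of $\psi$'s and $\tilde\psi$'s in which any normalization constants cancel.
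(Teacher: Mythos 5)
Your proposal is correct and coincides with the paper's own derivation: the corollary is obtained by specializing Theorem \ref{T2.2} to the comparison parameters $\tilde a_\alpha\equiv 1$, $\tilde b_j\equiv 0$ with $\tilde\psi\equiv 1$, $\tilde\sigma=d$, and $\tilde\Sigma=\sqrt{4(d-1)}$ from \cite[Example 7.1]{ABS}. No gaps; your extra remarks about unnormalized eigenvectors and attainment of the extrema are consistent with the Remark following Theorem \ref{T2.2}.
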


Not all finite graphs of homogeneous degree $d$ are bipartite (e.g., the complete graph with $d+1$ vertices. The lowest eigenvalue of $J=\{a\equiv1,b\equiv0\}$ on this complete graph is $\sigma_-(a,b) = -1$ so $\sigma_-(a,b)\neq-\sigma(a,-b)=-d$) but the universal cover, $\calT_d$, which is the homogeneous tree of degree $d$, is always bipartite (since every tree is bipartite).  If $\calG$ is a bipartite degree $d$ finite graph, it is easy to see that $\tilde{\sigma} = d$ (the constant function is an eigenvector) so $\tilde{G}_+ = d-\sqrt{4(d-1)}$.  Since $\calG$ is bipartite, $\tilde{G}_- = d-\sqrt{4(d-1)}$, and so there is a result similar to (2.14) for $G_-(a,b)$.


Another explicit example is the $rg$-model of \cite[Example 7.2]{ABS}, where $r>g$ are two positive integers.  The underlying graph, $\calG$, has $r$ red vertices and $g$ green vertices so $p=r+g$. There are $rg$ edges, one between each pair of differently colored vertices. The natural comparison Jacobi parameters have all $\tilde{a}=1$ and all $\tilde{b}=0$ which is the model studied in \cite[Example 7.2]{ABS} and in Section \ref{s4} below. If you look at a vector where all red vertices have value $u$ and all green value $v$ and ask it be an eigenvector of the Jacobi matrix $\tilde{J}$ with eigenvalue $\lambda$, then the equations are
\begin{equation}\label{2.15}
  ru=\lambda v, \qquad gv = \lambda u
\end{equation}
whose solutions are easily seen to be (unique up to normalization)
\begin{equation}\label{2.16}
  \lambda = \pm\sqrt{rg}, \qquad v=\sqrt{r}, \quad u=\pm\sqrt{g}
\end{equation}
It is easily seen that $\tilde{J}$ has rank $2$ so the orthogonal complement of these eigenvectors is $\ker(\tilde{J})$ and thus
\begin{equation}\label{2.17}
  \tilde{\sigma} = \sqrt{rg}, \quad \tilde{\sigma}_- = -\sqrt{rg}
\end{equation}
By \cite[(7.16)]{ABS}, we have
\begin{equation}\label{2.18}
  \tilde{\Sigma} = (\sqrt{r-1}+\sqrt{g-1})^2, \quad \tilde{\Sigma}_- = -(\sqrt{r-1}+\sqrt{g-1})^2
\end{equation}
which yields explicit formulae for $\tilde{G}$ and $\tilde{G}_-$ and explicit comparison formula.

We end this section by noting that these comparison results imply non-zero gaps for all the associated periodic tree Jacobi matrices without using Sy--Sunada and with explicit bounds.

\section{AntiBound Conjecture} \lb{s3}

As discussed above, there is a positive periodic eigensolution for any periodic Jacobi matrix on a tree but the corresponding energy, $\sigma$, is not in the spectrum if the tree isn't $\bbZ$.  The norm of the eigenfunction on a ball of radius $R$ is exponentially growing and it is only polynomial growth that implies connection to the spectrum.  In non-relativistic quantum scattering theory \cite{GW, New}, such purely exponentially growing solutions are called \emph{anti-bound states} and correspond to second sheet poles at real energies not in the spectrum. We think the same might be true in this case.  We start with some explicit examples for trees.

\begin{example} \lb{E3.1} The free Laplacian on a degree $d$ homogeneous tree has a Green's function (diagonal matrix element of the resolvent) that is well-known and computed, for example, in \cite[Example 7.1]{ABS}.  \cite[(7.3)]{ABS} says that
\begin{equation}\label{3.1}
  G(z) = \frac{(2-d)z+d\sqrt{z^2-4(d-1)}}{2(d^2-z^2)}
\end{equation}
Initially this function is defined on $\bbC\setminus\spec(H)$ where
\begin{equation}
\spec(H)=[-s_d,s_d]; \quad s_d=\sqrt{4(d-1)}
\end{equation}
is determined as the set where $G$ has a boundary value with non-zero imaginary part.  As noted in the last section, $\sigma=d$ is the largest eigenvalue of the underlying finite Jacobi matrix.  It appears that $G(z)$ has a pole at $z=\pm d$ but, in fact, since we take the branch of square root which is positive on $(s_d,\infty)$, that square root at $z=d$ is $(d-2)$ and the numerator vanishes so there is no pole at $z=\pm d$ where $G$ is originally defined. However, $G$ has a meromorphic continuation to a two-sheeted Riemann surface and on the second sheet, the square root takes the value $\mp(d-2)$ at $z=\pm d$, so $G$ has second sheet poles at $\pm\sigma$.
\end{example}

\begin{example} \label{E3.2} The $rg$-model mentioned at the end of the last section is studied as \cite[Example 7.2]{ABS}.  \cite[(7.13)/(7.14)]{ABS} compute the Green's functions $G_r, G_g$ at red and green sites
\begin{align}
  G_r(z) &= \frac{(2-g)z^2-g\left[(r-g)-\sqrt{\Phi(z)}\right]}{2z(rg-z^2)} \label{3.2} \\
  G_g(z) &= \frac{(2-r)z^2-r\left[(g-r)-\sqrt{\Phi(z)}\right]}{2z(rg-z^2)} \label{3.3} \\
  \Phi(z) &= z^4+2(2-(r+g))z^2+(r-g)^2 \label{3.4}
\end{align}
If $\eta_\pm=\sqrt{r-1}\pm\sqrt{g-1}$, then $\spec(H)=[-\eta_+,-\eta_-]\cup[\eta_-,\eta_+]\cup\{0\}$ and the Green's functions are initially defined on $\bbC\setminus\spec(H)$.  As we saw above, $\sigma_\pm=\pm\sqrt{rg}$.  The denominators of \eqref{3.2}/\eqref{3.3} vanish at $\sigma_\pm$ but a calculation shows that the numerators also vanish there on the original domain of definition so the only pole is at $z=0$ for $G_r$.  Again, there is a meromorphic continuation to a two-sheeted Riemann surface with poles at $\sigma_\pm$.
\end{example}

\begin{example} \lb{E3.3} The last example where \cite{ABS} computes the Green's functions is the Jacobi matrix over the following graph
\begin{figure}[H]
\includegraphics[scale=0.6,clip=true]{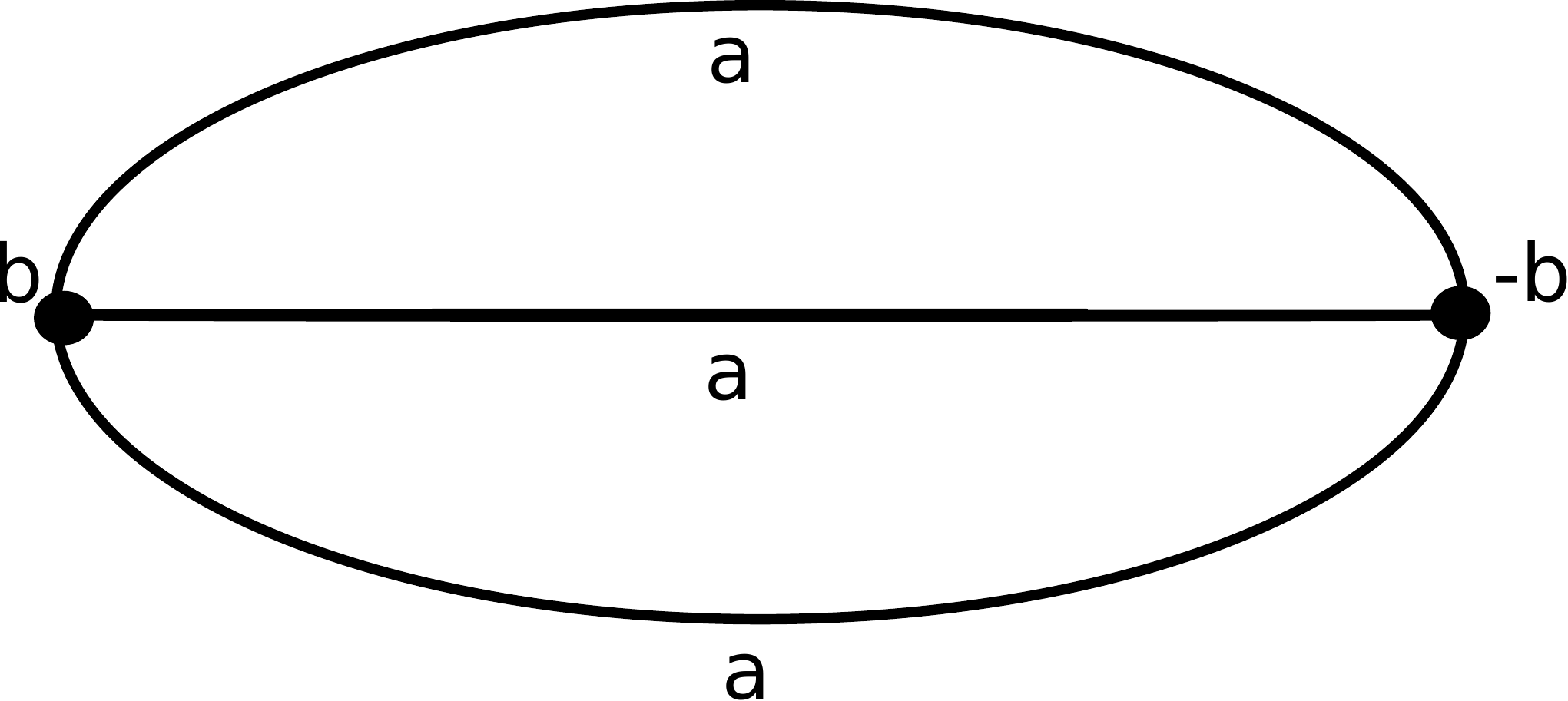}
\caption{Example \ref{E3.3}}
\end{figure}
\noindent whose covering tree is the homogeneous degree $3$ tree with all $a=1$ and alternate values $b$ and $-b$ for the potential.  The Green's functions are \cite[(7.25)]{ABS}
\begin{equation}\label{3.5}
  G_\pm(z)=\frac{(b^2-z^2)+3\sqrt{\Delta}}{2(z\mp b)(9-z^2+b^2)}; \quad \Delta=(z^2-b^2)^2-8(z^2-b^2)
\end{equation}
The spectrum of $H$ is 
\begin{equation}
\Bigl[-\sqrt{b^2+8},-b\Bigr]\cup\Bigl[b,\sqrt{b^2+8}\Bigr]
\end{equation}
$J$ is the $2\times 2$ matrix $\left(\begin{smallmatrix}
                                                                                           b & 3 \\
                                                                                           3 & -b
                                                                                         \end{smallmatrix}\right)$ which has eigenvalues
$\sigma_\pm=\pm\sqrt{b^2+9}$.  Since $\Delta(\sigma_\pm)=9$, we see that the numerator and denominator of \eqref{3.5} vanish at $\sigma_\pm$ so on the original domain of definition, $\bbC\setminus\spec(H)$, $G$ has no poles, but, once again, there are second sheet poles at $\sigma_\pm$.
\end{example}

Given all these examples, we state the following

\begin{conjecture} \lb{C3.4} For any periodic Jacobi matrix, $H$, on a tree (that is not $\bbZ$), the Green's functions analytically continued from $\bbC\setminus\spec(H)$ have poles on a higher sheet at $z=\sigma$ and, in the bipartite case, also at $\sigma_-$.
\end{conjecture}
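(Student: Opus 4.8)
The plan is to combine the self‑similar (recursive) structure of the Green's function on a tree with the Perron–Frobenius eigensolution. Fix a vertex $o\in\calT$ and write $G_{oo}(z)=\jap{\delta_o,(H-z)^{-1}\delta_o}$. For a directed edge $\vec\alpha=(u\to v)$ of $\calT$ let $\calT_{\vec\alpha}$ be the branch of $\calT$ beyond $v$ (the component of $\calT$, with the edge $\{u,v\}$ removed, that contains $v$), let $H_{\vec\alpha}$ be the restriction of $H$ to $\ell^2(\calT_{\vec\alpha})$, and set $m_{\vec\alpha}(z)=\jap{\delta_v,(H_{\vec\alpha}-z)^{-1}\delta_v}$. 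The Schur‑complement (Feshbach) identity gives
\[
G_{oo}(z)^{-1}=b_o-z-\sum_{k\sim o}a_{\{o,k\}}^2\,m_{(o\to k)}(z),
\]
and each $m_{\vec\alpha}$ obeys the same identity with $o$ replaced by $v$ and the back‑neighbour $u$ deleted from the sum. Since $H$ is periodic there are only finitely many branches $\calT_{\vec\alpha}$ up to deck transformations, so the tuple $\{m_{\vec\alpha}\}_{\vec\alpha\in\vec E(\calG)}$ solves a \emph{closed} finite system of polynomial equations and $G_{oo}$ is a rational function of $z$ and this tuple; this is (a route to) the algebraicity theorem of \cite{ABS}. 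Among the finitely many algebraic solutions, the physical one is distinguished by each $m_{\vec\alpha}$ being a Herglotz function.

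Next I would produce, at $z=\sigma$, a second solution of this system directly from the positive periodic solution $\psi$ of $H\psi=\sigma\psi$ (the lift of the finite‑graph Perron eigenvector), namely
\[
\mu_{(u\to v)}:=-\frac{\psi_v}{a_{\{u,v\}}\,\psi_u}.
\]
A one‑line computation using $(H\psi)(v)=\sigma\psi_v$ shows that $\{\mu_{\vec\alpha}\}$ solves the $m$‑function system at $z=\sigma$, and substituting it into the right‑hand side of the displayed formula for $G_{oo}^{-1}$ yields $\psi_o^{-1}\bigl((H\psi)(o)-\sigma\psi_o\bigr)=0$. Hence on the branch of the algebraic function $G_{oo}$ selected by the $\mu$‑solution there is a pole at $z=\sigma$. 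This branch is not the physical one: as $\calT\neq\bbZ$, the graph $\calG$ has more than one independent loop, so $\sigma\notin\spec(H)$ by Sy–Sunada \cite{SS}; moreover $\sigma>\sup\spec(H)$, so $H-\sigma$ is negative definite and $G_{oo}^{\mathrm{phys}}(\sigma)=\jap{\delta_o,(H-\sigma)^{-1}\delta_o}<0$, whence $G_{oo}^{\mathrm{phys}}(\sigma)^{-1}\neq0$ and the tuple $\bigl(m_{\vec\alpha}^{\mathrm{phys}}(\sigma)\bigr)$ cannot equal $(\mu_{\vec\alpha})$. The bipartite statement at $\sigma_-$ follows verbatim with $\psi$ replaced by the lowest eigensolution $\psi^{(-)}$ (for which $U\psi^{(-)}>0$), using $\sigma_-<\inf\spec(H)$; and the off‑diagonal Green's functions are handled the same way.

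The remaining step — and, I expect, the only substantial one — is to show that this $\mu$‑branch lies on the \emph{same} connected Riemann surface as the physical branch, i.e.\ that the pole at $\sigma$ really is encountered under analytic continuation of $G_{oo}$ from $\bbC\setminus\spec(H)$ and is not a point of an unrelated algebraic function. Equivalently, one wants the minimal polynomial of $G_{oo}$ over $\bbC(z)$ (or the variety defined by the $m$‑function system) to be irreducible, so that monodromy permutes the sheets transitively and the point lying over $z=\sigma$ with value $\infty$ is reached; in Examples \ref{E3.1}--\ref{E3.3} this is transparent, the surface being two‑sheeted and the $\mu$‑branch being exactly the sign‑flip of the band‑edge square root that produces the cancellation observed there. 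I would try either a direct monodromy computation — continue the physical branch down through the top band of $\spec(H)$ onto the ``second sheet'' and track the $m_{\vec\alpha}$ out to $z=\sigma$, checking they arrive at $\mu_{\vec\alpha}$, possibly only after crossing several bands (which is why the conjecture says ``a higher sheet'') — or a genericity/deformation argument, noting that irreducibility is an open condition on the Jacobi parameters and is routine to verify at the base point $a\equiv1,\ b\equiv0$ on a given graph, then propagating it; the delicate point is the possible failure of irreducibility along a proper subvariety of parameters, which would have to be treated separately, or bypassed by a parameter‑free reason the $\mu$‑branch always lies in the physical component. Everything before this last step is a finite computation valid for every periodic tree Jacobi matrix, so the conjecture reduces to this global statement.
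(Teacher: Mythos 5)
First, a point of order: the paper does not prove this statement. It is stated as Conjecture \ref{C3.4} and supported only by the three worked examples, Examples \ref{E3.1}--\ref{E3.3}, where the Riemann surface is explicitly two-sheeted and the second-sheet pole can be read off. So there is no proof of the paper's to compare yours against; any complete argument here would be new. That said, your local computations are correct and genuinely go beyond what the paper records: the Schur-complement recursion for the half-tree $m$-functions is right, the tuple $\mu_{(u\to v)}=-\psi_v/(a_{\{u,v\}}\psi_u)$ does solve the $m$-function system at $z=\sigma$ by exactly the one-line computation from $(H\psi)(v)=\sigma\psi_v$, it does force $G_{oo}^{-1}=0$, and it cannot be the physical tuple since $G_{oo}^{\mathrm{phys}}(\sigma)=\jap{\delta_o,(H-\sigma)^{-1}\delta_o}<0$ once Sy--Sunada \cite{SS} places $\sigma$ strictly above $\spec(H)$ (and your appeal to that is legitimate: a leafless connected graph whose cover is not $\bbZ$ has at least two independent loops). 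This neatly explains the cancellations in \eqref{3.1}, \eqref{3.2}/\eqref{3.3} and \eqref{3.5}: the physical square-root branch kills the apparent pole and the opposite branch is precisely your $\mu$-solution.

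The gap you flag at the end, however, is not a technical remainder but essentially the whole conjecture, and it is genuinely open. Two things are missing. (i) The $\mu$-tuple is a solution of the polynomial system satisfied by the $m$-functions, but the variety cut out by that system may be reducible; the algebraic function $G_{oo}$ that one continues from $\bbC\setminus\spec(H)$ is governed by its minimal polynomial over $\bbC(z)$, and a solution of the system lying on a different irreducible component is simply never reached by analytic continuation of the Herglotz branch, so no conclusion about poles of $G_{oo}$ follows. (ii) Even granting irreducibility (connectedness of the normalization, hence transitive monodromy), one must still check that the point over $z=\sigma$ at which the function is infinite is actually attained on the normalization of the component containing the physical branch, rather than sitting at a singular point whose local branches separate. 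Your proposed remedies --- direct monodromy tracking, or genericity of irreducibility in the Jacobi parameters --- would at best yield the conjecture off a proper subvariety of parameters, and degenerate parameters (extra symmetry, as with the rank-two $\tilde J$ of the $rg$-model, or flat-band situations where point spectrum appears) are exactly where reducibility is most plausible. So the proposal is a correct and useful reformulation of Conjecture \ref{C3.4} as an irreducibility/connectivity statement about the $m$-function variety, with all the local algebra done, but it is not a proof.
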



Recall \cite{LPS} that certain finite graphs are called Ramanujan because $\sigma$ or $\pm\sigma$ are singled out from the other eigenvalues.  We wonder if there is a class of finite graph Jacobi matrices for which the only higher sheet poles of the Green's functions of its covering tree are at $\pm\sigma$.

\section{The $rg$-Model} \lb{s4}

The $rg$-model \cite[Example 7.2]{ABS} mentioned at the end of Section \ref{s2} was originally emphasized by Aomoto \cite{AomotoPoint} because he showed, by proving one of its Green's functions had a pole, it has a point eigenvalue at $0$ which is outside the continuous spectrum of $H$. Earlier, Godsil--Mohar \cite{GM88} had also noted the model had a spectral measure with a pure point at zero and computed the weight it contributes to the IDS. \cite{ABS} wrote down an explicit zero energy eigenvector, namely view the corresponding tree $\calT_{r,g}$ (with $r>g$) as a tree centered at a single red vertex at level $0$, with $g$ vertices at level $1$, then $g(r-1)$ vertices at level $2$, each level $1$ vertex linked to $r-1$ level $2$ vertices, etc. Thus, level $2k-1; k=1,2,\ldots$ has $g[(r-1)(g-1)]^{k-1}$ green vertices while level $2k; k=1,2,\ldots$ has $g(r-1)[(r-1)(g-1)]^{k-1}$ red vertices.  The ABS eigenfunction is radially symmetric (i.e., of constant value on each level) with
\begin{equation}\label{4.1}
\begin{aligned}
  &u(2k-1) = 0; \quad k=1,2,\ldots, \\
  &u(2k) = (-1/(r-1))^k; \quad k=0,1,\ldots
\end{aligned}
\end{equation}
This $u$ has $\ell^2$-norm
\begin{align}\label{4.2}
  \norm{u}_2^2 &= 1+\sum_{k=1}^{\infty} g(r-1)[(r-1)(g-1)]^{k-1} (-1/(r-1))^{2k} \nonumber\\
               &=1+ \frac{g}{r-1}\sum_{n=1}^\infty\Big(\frac{g-1}{r-1}\Big)^{n-1}
               = 1 + \frac{g}{r-g} = \frac{r}{r-g}
\end{align}
and it is easy to confirm that $Hu=0$.

This model has a symmetry group, $\calS$, that includes translations that map any red vertex into any other red vertex.  Our goal in this section is to prove

\begin{theorem} \lb{T4.1} $u$ and its images under the symmetry $\calS$ span the eigen\-space, $\ker(H)$, in that the null vector is the only vector in this space which is orthogonal to $u$ and all its translates.
\end{theorem}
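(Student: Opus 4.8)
The plan is to reduce the claim to a statement about the finite graph $\calG=\calG_{r,g}$ and the structure of $\ker(H)$ on the tree $\calT_{r,g}$. First I would set up coordinates: fix a root red vertex $o$ and, for each vertex $v$, let $|v|$ denote its level (graph distance from $o$). Given any $\xi\in\ker(H)$, the eigenvalue equation $H\xi=0$ reads, at a red vertex $v$, $\sum_{w\sim v}\xi_w=0$, and likewise at green vertices; since all $a$'s are $1$ this is just the combinatorial relation ``sum over neighbours vanishes.'' I would then observe that the claim ``$u$ and its $\calS$-translates span $\ker(H)$'' is equivalent to: every $\xi\in\ker(H)\cap\ell^2$ that is orthogonal to all translates of $u$ is zero. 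So the real work is to show that the only $\ell^2$ null vector orthogonal to $\{\,\calS\text{-translates of }u\,\}$ is $0$.

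The key step is an averaging/symmetrization argument. Let $\xi\in\ker(H)\cap\ell^2$. For a fixed red vertex $\rho$, let $u_\rho$ be the translate of $u$ centered at $\rho$, which is supported on red vertices and decays like $(r-1)^{-|v|_\rho/1}$ in the appropriate sense (from \eqref{4.1}). The inner product $\jap{\xi,u_\rho}$ is a convergent sum over red vertices of $\overline{\xi_v}\,(u_\rho)_v$. I would compute this by grouping red vertices by their $\rho$-level: because $u_\rho$ is radial about $\rho$, $\jap{\xi,u_\rho}$ is (up to the known weights) a weighted sum of the quantities $A_k(\rho):=\sum_{|v|_\rho=2k}\xi_v$. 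Now the harmonicity of $\xi$ gives recursions among the $A_k(\rho)$: summing $H\xi=0$ over a sphere and using that each green vertex at odd level has exactly one neighbour toward $\rho$ and the rest away, one gets a two-term recursion for the ``red-sphere sums'' $A_k(\rho)$ whose only $\ell^2$-admissible solution matches the decay rate $(r-1)^{-k}$ of $u$ itself. Combined with the hypothesis $\jap{\xi,u_\rho}=0$ for \emph{all} $\rho$, this should force every red-sphere sum $A_k(\rho)$ to vanish, and then, by varying $\rho$ over all red vertices (which $\calS$ does transitively), force $\xi_v=0$ at every red vertex; harmonicity at red vertices then propagates this to the green vertices, giving $\xi=0$.

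An alternative, possibly cleaner route: pass to the finite graph. By the covering-space/periodicity structure, $\ker(H)$ decomposes via the ``fiber'' construction of \cite{ABS} — null vectors on $\calT_{r,g}$ correspond to the direct integral over characters of the deck group of null spaces of twisted finite matrices $J(\theta)$ on $\calG$. Since $\tilde J$ has rank $2$ (as noted around \eqref{2.17}), the untwisted kernel is $(p-2)$-dimensional on $\calG$, and one checks the zero-energy eigenspace on the tree is exactly the ``$\theta=\text{trivial}$'' part together with the specific radial vector; then spanning by $\calS$-translates of $u$ becomes a finite linear-algebra statement about how the cyclic subspace generated by $u$ under the permutation action of $\calS$ exhausts this fiber. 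I expect the main obstacle to be the convergence/interchange-of-summation bookkeeping in the first approach — making rigorous that $\jap{\xi,u_\rho}$ genuinely ``sees'' only the radial averages $A_k(\rho)$ and that the recursion has no nontrivial $\ell^2$ solution — or, in the second approach, correctly identifying the zero-energy fiber and checking that $\calS$ acts with no proper invariant complement to the cyclic span of $u$. I would lead with the averaging argument and fall back on the fiber decomposition if the $\ell^2$ estimates get unwieldy.
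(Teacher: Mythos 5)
Your first (averaging) approach is genuinely different from the paper's proof, which never touches sphere sums: the paper takes an orthonormal basis $\{e_j\}$ of $\ker(H)$ with $e_1=u/\norm{u}_2$, uses the spectral theorem to identify the residue of $\jap{\delta_\ell,(H-z)^{-1}\delta_\ell}$ at $z=0$ with $-\sum_j|e_j(\ell)|^2$, and then reads off from the ABS formulae that $G_g$ is regular at $0$ (so all of $\ker(H)$ vanishes at green vertices) and that the residue of $G_r$ is exactly $-|u(0)|^2/\norm{u}_2^2$ (Lemma \ref{L4.2}, via \eqref{4.2} and \eqref{4.3}), forcing $e_j(0)=0$ for $j\ge 2$; transitivity of $\calS$ on red vertices finishes it. Your combinatorial route can be pushed through and has the virtue of not importing \cite[(7.17)]{ABS}: summing $H\xi=0$ over the green sphere at $\rho$-level $2k-1$ gives $A_k(\rho)=-(g-1)A_{k-1}(\rho)$ for $k\ge2$ and $A_1(\rho)=-gA_0(\rho)$, hence $A_k(\rho)=(-1)^kg(g-1)^{k-1}\xi_\rho$, and then $\jap{u_\rho,\xi}=\tfrac{r}{r-g}\,\xi_\rho=\norm{u}_2^2\,\xi_\rho$, so orthogonality to $u_\rho$ kills $\xi_\rho$ directly. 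Note, however, that your stated mechanism is off: the $\ell^2$ condition does \emph{not} select a decay rate here --- every solution of the red-sphere recursion is compatible with $\ell^2$ precisely because $r>g$ --- the recursion simply determines all $A_k(\rho)$ from $\xi_\rho$, and the nonvanishing of the sum $\tfrac{r}{r-g}$ is what matters.

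There are two genuine gaps. First, ``harmonicity at red vertices then propagates this to the green vertices'' is false as stated: once $\xi$ vanishes on red vertices, the equations at red vertices only say that the sum of $\xi$ over the $g$ green neighbours of each red vertex is zero, which does not force the green values to vanish. You need a separate input: either the ABS fact that $G_g$ is regular at $z=0$ (the paper's route), or the analogous sphere-sum computation centered at a \emph{green} vertex $\rho'$, where the recursion gives $B_k=(-1)^kr(r-1)^{k-1}\xi_{\rho'}$ while the green sphere at level $2k$ has only $r(g-1)[(r-1)(g-1)]^{k-1}$ vertices; Cauchy--Schwarz then contradicts $\xi\in\ell^2$ unless $\xi_{\rho'}=0$ (this is where $r>g$ is essential, and it is the combinatorial shadow of the asymmetry between $G_r$ and $G_g$). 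Second, your fallback via ``direct integral over characters of the deck group'' is not available: the deck group of $\calT_{r,g}\to\calG$ is a nonamenable free product, and the failure of Floquet/character decomposition in exactly this setting is a central point of \cite{ABS}; the rank-$2$ observation about $\tilde J$ in Section \ref{s2} says nothing about the fiberwise kernels you would need. Lead with the averaging argument, but supply the green-vertex sphere-sum step explicitly.
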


Our proof relies on \eqref{4.2} and two formulae from \cite{ABS}, namely that $G_g$ is regular at $0$ and \cite[(7.17)]{ABS} that near $z=0$,
\begin{equation}\label{4.3}
  G_r(z)= -\frac{r-g}{rz} + \text{O}(1)
\end{equation}

\begin{remark} As noted in \cite[Example 8.5]{ABS}, the Aomoto Index Theorem implies that the DOS measure of the eigenvalue is $(r-g)/(r+g)$.  As also noted in \cite{ABS}, this follows from the regularity of $G_g$ at $0$ and \eqref{4.3}.  One can turn this around and show that \eqref{4.3} follows from Aomoto's calculation of the DOS weight of the eigenvalue.
\end{remark}

\begin{lemma} \lb{L4.2} The residue of $G_r$ at $z=0$ is $-|u(0)|^2/\norm{u}_2^2$.
\end{lemma}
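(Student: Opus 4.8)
The plan is to extract $\operatorname{Res}_{z=0}G_r$ from the spectral theorem and then identify it with the data of the explicit eigenfunction $u$. Write $o$ for the central red vertex of $\calT_{r,g}$, so that $G_r(z)=\jap{\delta_o,(H-z)^{-1}\delta_o}$, the branch of the resolvent being the one for which $G_r(z)\sim -1/z$ as $z\to\infty$ (as in \eqref{3.1}). Since $\spec(H)=[-\eta_+,-\eta_-]\cup[\eta_-,\eta_+]\cup\{0\}$ has $0$ as an isolated point and $H$ is bounded and self-adjoint, the spectral theorem gives, for $z$ near $0$,
\[
  (H-z)^{-1}=-\frac{P}{z}+R(z),
\]
with $P$ the orthogonal projection onto $\ker(H)$ and $R(z)$ holomorphic near $0$. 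Hence $G_r$ has a pole of order at most one at $0$ with $\operatorname{Res}_{z=0}G_r=-\jap{\delta_o,P\delta_o}=-\norm{P\delta_o}_2^2$, so the lemma reduces to the identity $\norm{P\delta_o}_2^2=|u(0)|^2/\norm{u}_2^2$.

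To prove that, I would show $P\delta_o$ is a scalar multiple of $u$, using two ingredients. First, $P\delta_o$ is radial about $o$: every graph automorphism of $\calT_{r,g}$ commutes with $H$ (whose entries are $0$ on the diagonal and $1$ on the edges), hence with $P$; the stabilizer of $o$ in $\operatorname{Aut}(\calT_{r,g})$ fixes $\delta_o$ and acts transitively on each sphere $\{x:\dist(x,o)=n\}$ (it permutes the $g$ level-one neighbours of $o$ arbitrarily, then inside each of their subtrees permutes the $r-1$ level-two neighbours, and so on), so $P\delta_o$ is constant on spheres. Second, the space of radial elements of $\ker(H)$ is one-dimensional: for radial $v$ the equation $(Hv)(x)=0$ reads $g\,v(1)=0$ at $o$, then $v(2k-2)+(r-1)v(2k)=0$ at a level $2k-1$ green vertex and $v(2k-1)+(g-1)v(2k+1)=0$ at a level $2k$ red vertex, which forces $v(2k-1)=0$ and $v(2k)=(-1/(r-1))^k v(0)$, i.e.\ $v=v(0)\,u$; and $u\in\ell^2$ by \eqref{4.2}, so this radial subspace is exactly $\bbC u$. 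Hence $P\delta_o=c\,u$ for some $c\in\bbR$ ($c$ real since $P$ and $u$ are real), and $c\ne 0$ because $\jap{\delta_o,u}=u(0)=1\ne 0$ while $u\in\ker(H)$. Finally, $P=P^{*}=P^{2}$ and reality of $u$ give $c\,u(0)=\jap{P\delta_o,\delta_o}=\norm{P\delta_o}_2^2=c^{2}\norm{u}_2^{2}$, so $c=u(0)/\norm{u}_2^2$ and therefore $\norm{P\delta_o}_2^2=c^{2}\norm{u}_2^{2}=|u(0)|^2/\norm{u}_2^2$, which is the lemma.

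The step I expect to need the most care is the radiality of $P\delta_o$, i.e.\ that the stabilizer of $o$ (inside $\calS$, or inside $\operatorname{Aut}(\calT_{r,g})$, either of which commutes with $H$) is large enough to be transitive on spheres about $o$; this is immediate from the colour-preserving automorphism structure of $\calT_{r,g}$ but should be written out carefully. The remaining steps — the resolvent Laurent expansion, the sign convention, the three-line recursion, and the idempotency computation — are routine. As a cross-check, and an alternative shorter finish that bypasses the structural argument, one can instead combine \cite[(7.17)]{ABS} (quoted as \eqref{4.3}), which gives $\operatorname{Res}_{z=0}G_r=-(r-g)/r$ and hence $\norm{P\delta_o}_2^2=(r-g)/r$, with \eqref{4.2}, which gives $\norm{u}_2^2=r/(r-g)$ and $u(0)=1$, so that $|u(0)|^2/\norm{u}_2^2=(r-g)/r$ as well; conversely, the structural route together with \eqref{4.2} re-derives \eqref{4.3}.
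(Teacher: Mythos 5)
Your argument is correct, but it takes a genuinely different route from the paper's. The paper's entire proof of Lemma \ref{L4.2} is the one-line computation you relegate to a cross-check at the very end: by \eqref{4.3} the residue is $-(r-g)/r$, while \eqref{4.2} and $u(0)=1$ give $-|u(0)|^2/\norm{u}_2^2=-\bigl(r/(r-g)\bigr)^{-1}=-(r-g)/r$ as well. Your main argument instead derives the identity structurally: the Laurent expansion $(H-z)^{-1}=-P/z+R(z)$ at the isolated spectral point $0$ gives $\operatorname{Res}_{z=0}G_r=-\norm{P\delta_o}_2^2$, and the sphere-transitivity of the stabilizer of $o$ plus the three-term radial recursion show that the radial part of $\ker(H)$ is exactly $\bbC u$, whence $P\delta_o=\bigl(u(0)/\norm{u}_2^2\bigr)u$. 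This buys something real: it makes no use of the explicit formula \cite[(7.17)]{ABS} (quoted as \eqref{4.3}) and in fact re-derives it, and it isolates the mechanism --- uniqueness of the radial $\ell^2$ kernel element --- that the paper only exploits afterwards, in the proof of Theorem \ref{T4.1}, through the vanishing of $e_j(0)$ for $j\ge2$. What the paper's route buys is brevity, since ABS have already computed $G_r$ explicitly. The two points you yourself flag as needing care are the right ones, and both check out: $0$ is isolated in $\spec(H)$ because $\eta_-=\sqrt{r-1}-\sqrt{g-1}>0$ for $r>g\ge2$, and the stabilizer of $o$ in $\operatorname{Aut}(\calT_{r,g})$ does act transitively on spheres, commuting with $H$ since $H$ is just the adjacency operator here.
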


\begin{proof} Immediate from \eqref{4.2}, \eqref{4.3} and $u(0)=1$.
\end{proof}

\begin{proof} [Proof of Theorem \ref{T4.1}] Let $e_1=u/\norm{u}_2$ and pick $\{e_j\}_{j=2}^\infty$ an orthonormal basis for the orthogonal complement of $e_1$ in $\ker(H)$. By the spectral theorem, the residue of the pole of $\jap{\delta_\ell,(H-z)^{-1}\delta_\ell}$ at $z=0$ is just $-\sum_{j=1}^{\infty}|e_j(\ell)|^2$.  Since $G_g$ has no pole at $z=0$, we conclude every $e_j$ vanishes at every green vertex which implies that every $\varphi\in\ker(H)$ vanishes at all green vertices.  By the above lemma, we conclude that for every $j=2,3,\ldots$, $e_j(0)$ vanishes.  It follows that if $\varphi\in\ker(H)$ is orthogonal to $u$, then $\varphi(0)=0$.

Now let $\varphi\in\ker(H)$ be orthogonal to $u$ and all its translates under $\calS$.  Since $\calS$ acts transitively on the red vertices, the argument in the last paragraph implies that $\varphi$ vanishes on the red vertices.  Since $\varphi\in\ker(H)$, it vanishes at the green vertices.  Thus it is the null vector as claimed. \end{proof}

As we were preparing this paper, which represents work mainly done in June 2019, we received an early draft of a very interesting paper of Banks et al. \cite{BGVM} that provides a lot of information about point spectrum of periodic Jacobi matrices on trees.  It seems to us possible that with the methods of \cite{BGVM} one can extend Theorem \ref{T4.1} to a much more general context.


\end{document}